\documentclass[dvipdfmx]{amsart}
\usepackage{amsfonts}
\usepackage{amssymb}
\usepackage{amsmath}
\usepackage{amsthm}
\usepackage{amscd}
\usepackage{mathrsfs}
\usepackage{graphicx, color}
\usepackage{url}

\theoremstyle{theorem}
\newtheorem{theorem}{Theorem}[section]
\newtheorem{lemma}[theorem]{Lemma}

\theoremstyle{definition}

\newtheorem{remark}[theorem]{Remark}


\newcommand{\vect}[1]{
 {\boldsymbol #1}
}
\newcommand{\SmallQuandle}[2]{
 {{\sf SmallQuandle}(#1, #2)}
}

\begin{document}

\title[The knot quandle of the twist-spun trefoil is a central extension]{The knot quandle of the twist-spun trefoil is \\ a central extension of a Schl\"{a}fli quandle}

\author{Ayumu Inoue}
\address{Department of Mathematics, Tsuda University, 2-1-1 Tsuda-machi, Kodaira-shi, Tokyo 187-8577, Japan}
\email{ayminoue@tsuda.ac.jp}

\subjclass[2020]{57K12, 52B15}
\keywords{quandle, extension, symmetry, tessellation, polytope}

\begin{abstract}
A quandle is an algebraic system which excels at describing limited symmetries of a space.
We introduce the concept of Schl\"{a}fli quandles which are defined relating to chosen rotational symmetries of regular tessellations.
On the other hand, quandles have a good chemistry with knot theory.
Associated with a knot we have its knot quandle.
We show that the knot quandle of the $m$-twist-spun trefoil is a central extension of the Schl\"{a}fli quandle related to the regular tessellation $\{ 3, m \}$ in the sense of the Schl\"{a}fli symbol if $m \geq 3$.
\end{abstract}

\maketitle

\section{Introduction}
\label{sec:introduction}

Although all symmetries of a space form a group, its subset which consists of particular symmetries does not in general.
For instance, the reflective symmetries of a regular polygon do not form a group by themselves.
On the other hand, they form a quandle which is an algebraic system.
This quandle is called a dihedral quandle, if the polygon has an odd number of sides.
Various kinds of specified symmetries form quandles.
For example, the $(2 \pi / 3)$-rotations of a regular tetrahedron about axes passing through its center and the vertices form the tetrahedral quandle.
We also have hexahedral, octahedral, dodecahedral, and icosahedral quandles in the same manner.

Regular polyhedra are identified with regular tessellations of the 2-dimensional spherical space $\mathbb{S}^{2}$.
Two copies of a regular polygon also tessellate $\mathbb{S}^{2}$ regularly.
In this view, each of dihedral and polyhedral quandles is considered as a quandle consisting of the rotations of $\mathbb{S}^{2}$ about the vertices of a regular tessellation by the same angle which preserve the tessellation setwise.
In a similar way, we may have quandles related to regular tessellations of spherical, Euclidean, and hyperbolic spaces.
Since those tessellations are characterized by Schl\"{a}fli symbols, we call them Schl\"{a}fli quandles\footnote{The author has called a Schl\"{a}fli quandle a mosaic quandle in early version of this paper \cite{Inoue2018}.}.
In this paper, we focus on Schl\"{a}fli quandles related to regular tessellations $\{ 3, m \}$ ($m \geq 2$), $\{ 3, 3, 4 \}$, $\{ 3, 4, 3 \}$ and $\{ 3, 3, 5 \}$ in the sense of Schl\"{a}fli symbols.
Since the latter three tessellations are respectively identified with 16-, 24-, and 600-cells, we call correspondent Schl\"{a}fli quandles 16-, 24-, and 600-cells quandles respectively.

Quandles also have a good chemistry with knot theory.
Associated with a knot, we have its knot quandle in a similar way to its knot group.
In this paper, we focus on the knot quandle of the $m$-twist-spun trefoil.
Here, the $m$-twist-spun trefoil is a typical 2-knot, i.e., a 2-sphere embedded in the standard 4-sphere smoothly and locally flatly.
We see that the 16-, 24-, or 600-cell quandle is respectively isomorphic to the knot quandle of the 3-, 4-, or 5-twist-spun trefoil (Theorem \ref{thm:main1}).
It is known by Clark et al.\ \cite{CSV2016} with computer calculation that the knot quandle of the 3- or 4-twist-spun trefoil is a central extension of the Schl\"{a}fli quandle related to $\{ 3, 3 \}$ or $\{ 3, 4 \}$ respectively, in terms of this paper.
We show that this relationship between the knot quandle of the $m$-twist-spun trefoil and the Schl\"{a}fli quandle related to $\{ 3, m \}$ is lasting forever, i.e., the knot quandle of the $m$-twist-spun trefoil is a central extension of the Schl\"{a}fli quandle related to $\{ 3, m \}$ if $m \geq 3$ (Theorem \ref{thm:main2}).

\section{Quandle}
\label{sec:quandle}

In this section, we recall some notions about quandles briefly.
We refer the reader to \cite{Kamada2017} for more details.

A \emph{quandle} is a non-empty set $X$ equipped with a binary operation $\ast : X \times X \rightarrow X$ satisfying the following axioms:
\begin{itemize}
\item[(Q1)]
For each $x \in X$, $x \ast x = x$
\item[(Q2)]
For each $x \in X$, the map $\ast \, x : X \rightarrow X$ ($w \mapsto w \ast x$) is bijective
\item[(Q3)]
For each $x, y, z \in X$, $(x \ast y) \ast z = (x \ast z) \ast (y \ast z)$
\end{itemize}

Let us see a few examples of quandles.
Consider a set $X$ which consists of the vertices of a regular $n$-gon $P$ in $\mathbb{R}^{2}$ ($n \geq 3$).
For each $v \in X$, let $l_{v}$ be the line passing through the center of $P$ and $v$.
Then for each $v, w \in X$ we have $v \ast w \in X$ which is the image of $v$ by the reflection through $l_{w}$.
It is easy to check that $\ast$ satisfies the axioms of a quandle.
We call this quandle the \emph{dihedral quandle} of order $n$.

Similarly, let $X$ be a set consisting of the vertices of a regular polyhedron $P$ in $\mathbb{R}^{3}$ and $l_{v}$ the line passing through the center of $P$ and $v \in X$.
Suppose that $\theta$ is $\pi / 2$ if $P$ is an octahedron, $2 \pi / 5$ if $P$ is an icosahedron, otherwise $2 \pi / 3$.
Then for each $v, w \in X$ we have $v \ast w \in X$ which is the image of $v$ by the $\theta$-rotation about $l_{w}$ (counterclockwise when we see the center from $w$).
It is routine to see that $\ast$ satisfies the axioms of a quandle.
We call this quandle the \emph{tetrahedral}, \emph{hexahedral}, \emph{octahedral}, \emph{dodecahedral}, or \emph{icosahedral quandle} respectively, if $P$ is a tetrahedron, hexahedron, octahedron, dodecahedron, or icosahedron.

The notions of homomorphism, epimorphism, isomorphism and automorphism are appropriately defined for quandles.
Suppose that $X$ is a quandle.
Axioms (Q2) and (Q3) assert that for each $x \in X$ the map $\ast \, x$ is an automorphism of $X$.
We call this automorphism an \emph{inner automorphism} of $X$.
The inner automorphisms of $X$ generate the \emph{inner automorphism group} $\mathrm{Inn(X)}$ of $X$ which is a subgroup of the automorphism group of $X$.
We call $X$ to be \emph{connected} if $\mathrm{Inn(X)}$ acts transitively on $X$.

\section{Schl\"{a}fli quandle}
\label{sec:Schlafli_quandle}

In this section, we introduce the concept of Schl\"{a}fli quandles.
Then we concretely construct Schl\"{a}fli quandles related to regular tessellations $\{ 3, m \}$ ($m \geq 2$), $\{ 3, 3, 4 \}$, $\{ 3, 4, 3 \}$ and $\{ 3, 3, 5 \}$ in the sense of Schl\"{a}fli symbols.

Consider a regular tessellation $T$ of a spherical, Euclidean, or hyperbolic space $\mathbb{B}$.
Let $V$ be the set consisting of vertices of $T$.
For each $v \in V$ choose a rotation $r_{v}$ of $\mathbb{B}$ which fixes $v$ and preserves $T$ setwise.
If we have
\begin{equation}
 r_{r_{w}(v)} = r_{w} \circ r_{v} \circ r_{w}^{-1} \label{eq:condition}
\end{equation}
for each $v, w \in V$, consider the set $\{ (v, r_{v}) \mid v \in V \}$ and define its binary operation $\ast$ by $(v, r_{v}) \ast (w, r_{w}) = (r_{w}(v), r_{r_{w}(v)})$.
It is easy to see that $\ast$ satisfies the axioms of a quandle.
We call this quandle a \emph{Schl\"{a}fli quandle} related to $T$.

Let us consider some concrete Schl\"{a}fli quandles.
We first focus on the regular tessellation $\{ 3, m \}$ ($m \geq 2$).
We note that $\{ 3, m \}$ tessellates $\mathbb{S}^{2}$ if $2 \leq m \leq 5$, the Euclidean plane if $m = 6$, otherwise the hyperbolic plane (see Figure \ref{fig:tesselations}).
For each vertex $v$ of $\{ 3, m \}$, let $r_{v}$ be the rotation about $v$ by the angle $2 \pi / m$.
Then the condition (\ref{eq:condition}) is obviously satisfied for each vertices $v, w$ of $\{ 3, m \}$.
We thus have a Schl\"{a}fli quandle related to $\{ 3, m \}$.
We note that the Schl\"{a}fli quandles related to $\{ 3, 2 \}$, $\{ 3, 3 \}$, $\{ 3, 4 \}$ and $\{ 3, 5 \}$ are obviously isomorphic to the dihedral quandle of order 3, tetrahedral, octahedral and icosahedral quandles respectively.
\begin{figure}[htbp]
 \centering
 \includegraphics[scale=0.25]{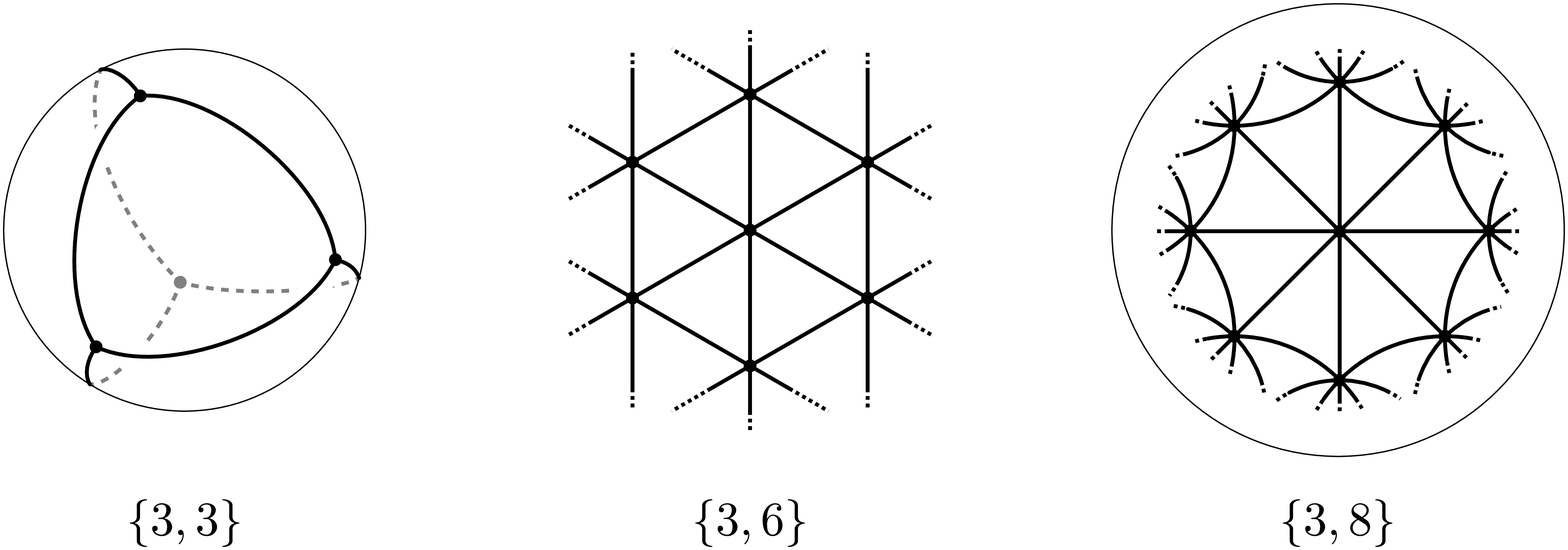}
 \caption{Regular tessellations $\{ 3, 3 \}$, $\{ 3, 6 \}$ and $\{ 3, 8 \}$, for example}
 \label{fig:tesselations}
\end{figure}

\begin{remark}
Regular tessellations $\{ 3, m \}$ converge to the Farey tessellation $\{ 3, \infty \}$ as $m$ goes to infinity.
We thus have a Schl\"{a}fli quandle related to $\{ 3, \infty \}$ as the ``limit'' of the Schl\"{a}fli quandles $\{ 3, m \}$.
Considering a famous relationship between $\{ 3, \infty \}$ and the mapping class group of a torus, it is routine to see that the Schl\"{a}fli quandle related to $\{ 3, \infty \}$ is isomorphic to the Dehn quandle of a torus (see \cite{NP2009}, for example, for a Dehn quandle).
It is known by Niebrzydowski and Przytycki \cite{NP2009} that the Dehn quandle of a torus (i.e., the Schl\"{a}fli quandle related to $\{ 3, \infty \}$) is isomorphic to the knot quandle of the trefoil.
In contrast with the fact, we will see that the knot quandle of the $m$-twist-spun trefoil is a central extension of the Schl\"{a}fli quandle related to $\{ 3, m \}$ if $m \geq 3$ (Theorem \ref{thm:main2}).
We note that the knot quandle of the 2-twist-spun trefoil is isomorphic to the Schl\"{a}fli quandle related to $\{ 3, 2 \}$ (Remark \ref{rem:knot_quandle_of_twist-spun_trefoil}).
\end{remark}

We next focus on regular tessellations $\{ 3, 3, 4 \}$, $\{ 3, 4, 3 \}$ and $\{ 3, 3, 5 \}$ of the \linebreak 3-dimensional spherical space $\mathbb{S}^{3}$.
In the remaining, we identify $\mathbb{S}^{3}$ with the unit sphere in $\mathbb{R}^{4}$.
We may assume that sets of the vertices of $\{ 3, 3, 4 \}$, $\{ 3, 4, 3 \}$ and $\{ 3, 3, 5 \}$ are respectively
\begin{align*}
 V_{\{ 3, 3, 4 \}}
 &= \{ \pm \vect{e}_{1}, \pm \vect{e}_{2}, \pm \vect{e}_{3}, \pm \vect{e}_{4} \}, \\
 V_{\{ 3, 4, 3 \}}
 &= \{ \pm \, \vect{e}_{i} \pm \vect{e}_{j} \mid 1 \leq i < j \leq 4 \}, \\
 V_{\{ 3, 3, 5 \}}
 &= \{ \pm \vect{e}_{1}, \pm \vect{e}_{2}, \pm \vect{e}_{3}, \pm \vect{e}_{4} \}
 \cup \left\{ \dfrac{1}{2} (\pm \, \vect{e}_{1} \pm \vect{e}_{2} \pm \vect{e}_{3} \pm \vect{e}_{4}) \right\} \\[1ex]
 & \qquad \cup \left\{ \left. \dfrac{1}{2} \left( \pm \, \phi \hskip 0.15em \vect{e}_{\sigma(1)} \pm \vect{e}_{\sigma(2)} \pm \phi^{-1} \hskip 0.05em \vect{e}_{\sigma(3)} \right) \; \right| \, \sigma \in A_{4} \right\}.
\end{align*}
Here, $\vect{e}_{i} \in \mathbb{R}^{4}$ denotes the column vector whose $j$-th entry is $\delta_{ij}$, $\phi$ the golden ratio $\left( 1 + \sqrt{5} \right) / 2$, and $A_{4}$ the alternating group on $\{ 1, 2, 3, 4 \}$.
Associated with $v \in V_{\mathrm{S}}$ ($S \in \{ \{ 3, 3, 4 \}, \, \{ 3, 4, 3 \}, \, \{ 3, 3, 5 \} \}$), we define the $4 \times 4$ matrix $R_{v}$ as follows:

\medskip

\noindent
$\blacktriangleright$ $S = \{ 3, 3, 4 \}$
\begin{align*}
 & R_{\pm \vect{e}_{1}}
 = \begin{pmatrix} 1 & 0 & 0 & 0 \\ 0 & 0 & 0 & 1 \\ 0 & 1 & 0 & 0 \\ 0 & 0 & 1 & 0 \end{pmatrix}, &
 & R_{\pm \vect{e}_{2}}
 = \begin{pmatrix} 0 & 0 & -1 & 0 \\ 0 & 1 & 0 & 0 \\ 0 & 0 & 0 & -1 \\ 1 & 0 & 0 & 0 \end{pmatrix}, \\[1ex]
 & R_{\pm \vect{e}_{3}}
 = \begin{pmatrix} 0 & 0 & 0 & -1 \\ 1 & 0 & 0 & 0 \\ 0 & 0 & 1 & 0 \\ 0 & -1 & 0 & 0 \end{pmatrix}, &
 & R_{\pm \vect{e}_{4}}
 = \begin{pmatrix} 0 & -1 & 0 & 0 \\ 0 & 0 & -1 & 0 \\ 1 & 0 & 0 & 0 \\ 0 & 0 & 0 & 1 \end{pmatrix}.
\end{align*}

\medskip

\noindent
$\blacktriangleright$ $S = \{ 3, 4, 3 \}$
\begin{align*}
 & R_{\pm(\vect{e}_{1} + \vect{e}_{2})}
 = R_{\pm(\vect{e}_{1} - \vect{e}_{2})}
 = \begin{pmatrix} 1 & 0 & 0 & 0 \\ 0 & 1 & 0 & 0 \\ 0 & 0 & 0 & -1 \\ 0 & 0 & 1 & 0 \end{pmatrix}, \\[1ex] 
 & R_{\pm(\vect{e}_{3} + \vect{e}_{4})}
 = R_{\pm(\vect{e}_{3} - \vect{e}_{4})}
 = \begin{pmatrix} 0 & -1 & 0 & 0 \\ 1 & 0 & 0 & 0 \\ 0 & 0 & 1 & 0 \\ 0 & 0 & 0 & 1 \end{pmatrix}, \\[1ex] 
 & R_{\pm(\vect{e}_{1} + \vect{e}_{3})}
 = R_{\pm(\vect{e}_{2} + \vect{e}_{4})}
 = \dfrac{1}{2} \begin{pmatrix} 1 & -1 & 1 & 1 \\ 1 & 1 & -1 & 1 \\ 1 & 1 & 1 & -1 \\ -1 & 1 & 1 & 1 \end{pmatrix}, \\[1ex] 
 & R_{\pm(\vect{e}_{1} - \vect{e}_{3})}
 = R_{\pm(\vect{e}_{2} - \vect{e}_{4})}
 = \dfrac{1}{2} \begin{pmatrix} 1 & -1 & -1 & -1 \\ 1 & 1 & 1 & -1 \\ -1 & -1 & 1 & -1 \\ 1 & -1 & 1 & 1 \end{pmatrix}, \\[1ex] 
 & R_{\pm(\vect{e}_{1} + \vect{e}_{4})}
 = R_{\pm(\vect{e}_{2} - \vect{e}_{3})}
 = \dfrac{1}{2} \begin{pmatrix} 1 & -1 & -1 & 1 \\ 1 & 1 & -1 & -1 \\ 1 & -1 & 1 & -1 \\ 1 & 1 & 1 & 1 \end{pmatrix}, \\[1ex] 
 & R_{\pm(\vect{e}_{2} + \vect{e}_{3})}
 = R_{\pm(\vect{e}_{1} - \vect{e}_{4})}
 = \dfrac{1}{2} \begin{pmatrix} 1 & -1 & 1 & -1 \\ 1 & 1 & 1 & 1 \\ -1 & 1 & 1 & -1 \\ -1 & -1 & 1 & 1 \end{pmatrix}. 
\end{align*}

\newpage

\noindent
$\blacktriangleright$ $S = \{ 3, 3, 5 \}$
\begin{align*}
 & R_{\pm \vect{e}_{1}}
 = R_{\pm \frac{1}{2} (\phi \hskip 0.07em \vect{e}_{1} + \vect{e}_{2} + \phi^{-1} \vect{e}_{3})}
 = R_{\pm \frac{1}{2} (\phi \hskip 0.07em \vect{e}_{1} - \vect{e}_{2} - \phi^{-1} \vect{e}_{3})} \\
 & \quad = R_{\pm \frac{1}{2} (\phi^{-1} \vect{e}_{1} + \phi \hskip 0.07em \vect{e}_{2} + \vect{e}_{3})}
 = R_{\pm \frac{1}{2} (\phi^{-1} \vect{e}_{1} - \phi \hskip 0.07em \vect{e}_{2} - \vect{e}_{3})}
 = \dfrac{1}{2} \begin{pmatrix} 2 & 0 & 0 & 0 \\ 0 & 1 & \phi & - \phi^{-1} \\ 0 & \phi & - \phi^{-1} & 1 \\ 0 & \phi^{-1} & -1 & - \phi \end{pmatrix}, \\[1ex] 
 & R_{\pm \vect{e}_{2}}
 = R_{\pm \frac{1}{2} (\vect{e}_{1} + \phi \hskip 0.07em \vect{e}_{2} + \phi^{-1} \vect{e}_{4})}
 = R_{\pm \frac{1}{2} (\vect{e}_{1} - \phi \hskip 0.07em \vect{e}_{2} + \phi^{-1} \vect{e}_{4})} \\
 & \quad = R_{\pm \frac{1}{2} (\phi \hskip 0.07em \vect{e}_{1} + \phi^{-1} \vect{e}_{2} + \vect{e}_{4})}
 = R_{\pm \frac{1}{2} (\phi \hskip 0.07em \vect{e}_{1} - \phi^{-1} \vect{e}_{2} + \vect{e}_{4})}
 = \dfrac{1}{2} \begin{pmatrix} 1 & 0 & \phi^{-1} & \phi \\ 0 & 2 & 0 & 0 \\ - \phi^{-1} & 0 & - \phi & 1 \\ \phi & 0 & -1 & - \phi^{-1} \end{pmatrix}, \\[1ex] 
 & R_{\pm \vect{e}_{3}}
 = R_{\pm \frac{1}{2} (\vect{e}_{1} + \phi^{-1} \vect{e}_{3} - \phi \hskip 0.07em \vect{e}_{4})}
 = R_{\pm \frac{1}{2} (\vect{e}_{1} - \phi^{-1} \vect{e}_{3} - \phi \hskip 0.07em \vect{e}_{4})} \\
 & \quad = R_{\pm \frac{1}{2} (\phi^{-1} \vect{e}_{1} + \phi \hskip 0.07em \vect{e}_{3} - \vect{e}_{4})}
 = R_{\pm \frac{1}{2} (\phi^{-1} \vect{e}_{1} - \phi \hskip 0.07em \vect{e}_{3} - \vect{e}_{4})}
 = \dfrac{1}{2} \begin{pmatrix} - \phi^{-1} & 1 & 0 & - \phi \\ -1 & - \phi & 0 & - \phi^{-1} \\ 0 & 0 & 2 & 0 \\ - \phi & \phi^{-1} & 0 & 1 \end{pmatrix}, \\[1ex] 
 & R_{\pm \vect{e}_{4}}
 = R_{\pm \frac{1}{2} (\vect{e}_{2} - \phi \hskip 0.07em \vect{e}_{3} + \phi^{-1} \vect{e}_{4})}
 = R_{\pm \frac{1}{2} (\vect{e}_{2} - \phi \hskip 0.07em \vect{e}_{3} - \phi^{-1} \vect{e}_{4})} \\
 & \quad = R_{\pm \frac{1}{2} (\phi^{-1} \vect{e}_{2} - \vect{e}_{3} + \phi \hskip 0.07em \vect{e}_{4})}
 = R_{\pm \frac{1}{2} (\phi^{-1} \vect{e}_{2} - \vect{e}_{3} - \phi \hskip 0.07em \vect{e}_{4})}
 = \dfrac{1}{2} \begin{pmatrix} - \phi & 1 & \phi^{-1} & 0 \\ -1 & - \phi^{-1} & - \phi & 0 \\ - \phi^{-1} & - \phi & 1 & 0 \\ 0 & 0 & 0 & 2 \end{pmatrix}, \\[1ex] 
 & R_{\pm \frac{1}{2} (\vect{e}_{1} + \vect{e}_{2} - \vect{e}_{3} + \vect{e}_{4})}
 = R_{\pm \frac{1}{2} (\vect{e}_{1} - \phi^{-1} \vect{e}_{3} + \phi \hskip 0.07em \vect{e}_{4})}
 = R_{\pm \frac{1}{2} (\phi \hskip 0.07em \vect{e}_{2} - \phi^{-1} \vect{e}_{3} - \vect{e}_{4})} \\
 & \quad = R_{\pm \frac{1}{2} (\phi^{-1} \vect{e}_{1} + \phi \hskip 0.07em \vect{e}_{2} - \vect{e}_{3})}
 = R_{\pm \frac{1}{2} (\phi^{-1} \vect{e}_{1} - \vect{e}_{2} + \phi \hskip 0.07em \vect{e}_{4})}
 = \dfrac{1}{2} \begin{pmatrix} - \phi^{-1} & 1 & 0 & \phi \\ 0 & 1 & - \phi & - \phi^{-1} \\ - \phi & -1 & - \phi^{-1} & 0 \\ 1 & -1 & -1 & 1\end{pmatrix}, \\[1ex] 
 & R_{\pm \frac{1}{2} (\vect{e}_{1} - \vect{e}_{2} - \vect{e}_{3} - \vect{e}_{4})}
 = R_{\pm \frac{1}{2} (\vect{e}_{1} + \phi^{-1} \vect{e}_{2} + \phi \hskip 0.07em \vect{e}_{3})}
 = R_{\pm \frac{1}{2} (\phi \hskip 0.07em \vect{e}_{1} - \phi^{-1} \vect{e}_{2} - \vect{e}_{4})} \\
 & \quad = R_{\pm \frac{1}{2} (\vect{e}_{2} + \phi \hskip 0.07em \vect{e}_{3} + \phi^{-1} \vect{e}_{4})}
 = R_{\pm \frac{1}{2} (\phi \hskip 0.07em \vect{e}_{1} + \vect{e}_{3} - \phi^{-1} \vect{e}_{4})}
 = \dfrac{1}{2} \begin{pmatrix} 1 & 0 & \phi^{-1} & - \phi \\ -1 & - \phi^{-1} & \phi & 0 \\ 1 & 1 & 1 & 1 \\ -1 & \phi & 0 & - \phi^{-1} \end{pmatrix}, \\[1ex] 
 & R_{\pm \frac{1}{2} (\vect{e}_{1} - \vect{e}_{2} + \vect{e}_{3} - \vect{e}_{4})}
 = R_{\pm \frac{1}{2} (\vect{e}_{1} - \phi^{-1} \vect{e}_{2} + \phi \hskip 0.07em \vect{e}_{3})}
 = R_{\pm \frac{1}{2} (\phi^{-1} \vect{e}_{2} + \vect{e}_{3} + \phi \hskip 0.07em \vect{e}_{4})} \\
 & \quad = R_{\pm \frac{1}{2} (\phi^{-1} \vect{e}_{1} + \phi \hskip 0.07em \vect{e}_{3} + \vect{e}_{4})}
 = R_{\pm \frac{1}{2} (\phi^{-1} \vect{e}_{1} - \vect{e}_{2} - \phi \hskip 0.07em \vect{e}_{4})}
 = \dfrac{1}{2} \begin{pmatrix} - \phi^{-1} & 0 & \phi & -1 \\ - \phi & - \phi^{-1} & 0 & 1 \\ 1 & -1 & 1 & 1 \\ 0 & \phi & \phi^{-1} & 1 \end{pmatrix}, \\[1ex] 
 & R_{\pm \frac{1}{2} (\vect{e}_{1} - \vect{e}_{2} - \vect{e}_{3} + \vect{e}_{4})}
 = R_{\pm \frac{1}{2} (\vect{e}_{1} + \phi \hskip 0.07em \vect{e}_{2} - \phi^{-1} \vect{e}_{4})}
 = R_{\pm \frac{1}{2} (\phi \hskip 0.07em \vect{e}_{1} - \vect{e}_{3} + \phi^{-1} \vect{e}_{4})} \\
 & \quad = R_{\pm \frac{1}{2} (\phi \hskip 0.07em \vect{e}_{1} + \vect{e}_{2} - \phi^{-1} \vect{e}_{3})}
 = R_{\pm \frac{1}{2} (\phi \hskip 0.07em \vect{e}_{2} + \phi^{-1} \vect{e}_{3} - \vect{e}_{4})}
 = \dfrac{1}{2} \begin{pmatrix} 1 & 1 & -1 & 1 \\ \phi^{-1} & 1 & 0 & - \phi \\ - \phi & 1 & - \phi^{-1} & 0 \\ 0 & -1 & - \phi & - \phi^{-1} \end{pmatrix}, \\[1ex] 
\end{align*}
\begin{align*}
 & R_{\pm \frac{1}{2} (\vect{e}_{1} + \vect{e}_{2} - \vect{e}_{3} - \vect{e}_{4})}
 = R_{\pm \frac{1}{2} (\vect{e}_{1} + \phi^{-1} \vect{e}_{2} - \phi \hskip 0.07em \vect{e}_{3})}
 = R_{\pm \frac{1}{2} (\phi^{-1} \vect{e}_{2} + \vect{e}_{3} - \phi \hskip 0.07em \vect{e}_{4})} \\
 & \quad = R_{\pm \frac{1}{2} (\phi^{-1} \vect{e}_{1} + \vect{e}_{2} - \phi \hskip 0.07em \vect{e}_{4})}
 = R_{\pm \frac{1}{2} (\phi^{-1} \vect{e}_{1} - \phi \hskip 0.07em \vect{e}_{3} + \vect{e}_{4})}
 = \dfrac{1}{2} \begin{pmatrix} - \phi^{-1} & \phi & -1 & 0 \\ 0 & - \phi^{-1} & -1 & - \phi \\ - \phi & 0 & 1 & - \phi^{-1} \\ -1 & -1 & -1 & 1 \end{pmatrix}, \\[1ex] 
 & R_{\pm \frac{1}{2} (\vect{e}_{1} + \vect{e}_{2} + \vect{e}_{3} + \vect{e}_{4})}
 = R_{\pm \frac{1}{2} (\vect{e}_{1} - \phi \hskip 0.07em \vect{e}_{2} - \phi^{-1} \vect{e}_{4})}
 = R_{\pm \frac{1}{2} (\phi \hskip 0.07em \vect{e}_{1} + \vect{e}_{3} + \phi^{-1} \vect{e}_{4})} \\
 & \quad = R_{\pm \frac{1}{2} (\phi \hskip 0.07em \vect{e}_{2} + \phi^{-1} \vect{e}_{3} + \vect{e}_{4})}
 = R_{\pm \frac{1}{2} (\phi \hskip 0.07em \vect{e}_{1} - \vect{e}_{2} + \phi^{-1} \vect{e}_{3})}
 = \dfrac{1}{2} \begin{pmatrix} 1 & - \phi^{-1} & \phi & 0 \\ -1 & 1 & 1 & 1 \\ 1 & 0 & - \phi^{-1} & \phi \\ 1 & \phi & 0 & - \phi^{-1} \end{pmatrix}, \\[1ex] 
 & R_{\pm \frac{1}{2} (\vect{e}_{1} - \vect{e}_{2} + \vect{e}_{3} + \vect{e}_{4})}
 = R_{\pm \frac{1}{2} (\vect{e}_{1} + \phi^{-1} \vect{e}_{3} + \phi \hskip 0.07em \vect{e}_{4})}
 = R_{\pm \frac{1}{2} (\phi \hskip 0.07em \vect{e}_{2} - \phi^{-1} \vect{e}_{3} + \vect{e}_{4})} \\
 & \quad = R_{\pm \frac{1}{2} (\phi^{-1} \vect{e}_{1} + \vect{e}_{2} + \phi \hskip 0.07em \vect{e}_{4})}
 = R_{\pm \frac{1}{2} (\phi^{-1} \vect{e}_{1} - \phi \hskip 0.07em \vect{e}_{2} + \vect{e}_{3})}
 = \dfrac{1}{2} \begin{pmatrix} - \phi^{-1} & 0 & \phi & 1 \\ -1 & 1 & -1 & 1 \\ 0 & - \phi & - \phi^{-1} & 1 \\ \phi & \phi^{-1} & 0 & 1 \end{pmatrix}, \\[1ex] 
 & R_{\pm \frac{1}{2} (\vect{e}_{1} + \vect{e}_{2} + \vect{e}_{3} - \vect{e}_{4})}
 = R_{\pm \frac{1}{2} (\vect{e}_{1} - \phi^{-1} \vect{e}_{2} - \phi \hskip 0.07em \vect{e}_{3})}
 = R_{\pm \frac{1}{2} (\phi \hskip 0.07em \vect{e}_{1} + \phi^{-1} \vect{e}_{2} - \vect{e}_{4})} \\
 & \quad = R_{\pm \frac{1}{2} (\vect{e}_{2} + \phi \hskip 0.07em \vect{e}_{3} - \phi^{-1} \vect{e}_{4})}
 = R_{\pm \frac{1}{2} (\phi \hskip 0.07em \vect{e}_{1} - \vect{e}_{3} - \phi^{-1} \vect{e}_{4})}
 = \dfrac{1}{2} \begin{pmatrix} 1 & 1 & -1 & -1 \\ 0 & - \phi^{-1} & 1 & - \phi \\ - \phi^{-1} & \phi & 1 & 0 \\ - \phi & 0 & -1 & - \phi^{-1} \end{pmatrix}. 
\end{align*}

\medskip

\noindent
For each $v \in V_{S}$, let $r_{v} : \mathbb{R}^{4} \rightarrow \mathbb{R}^{4}$ be the linear transformation sending $\vect{x}$ to $R_{v} \vect{x}$.
We note that $r_{v}$ is respectively a $(2 \pi / 3)$-, $(\pi / 2)$-, or $(2 \pi / 5)$-rotation about a plane in which $v$ and the origin of $\mathbb{R}^{4}$ lie, if $S$ is $\{ 3, 3, 4 \}$, $\{ 3, 4, 3 \}$ or $\{ 3, 3, 5 \}$.
It is routine to check that the condition (\ref{eq:condition}) is satisfied for each $v, w \in V_{S}$.
We thus have Schl\"{a}fli quandles related to $\{ 3, 3, 4 \}$, $\{ 3, 4, 3 \}$ and $\{ 3, 3, 5 \}$.
We call them \emph{16-}, \emph{24-}, and \emph{600-cells quandles} respectively, since convex hulls of $V_{\{ 3, 3, 4 \}}$, $V_{\{ 3, 4, 3 \}}$ and $V_{\{ 3, 3, 5 \}}$ in $\mathbb{R}^{4}$ are respectively known as 16-, 24-, and 600-cells.

\begin{remark}
\label{rem:RIG}
16- and 24-cells quandles are referred as $\SmallQuandle{8}{1}$ and $\SmallQuandle{24}{2}$ in {\sf GAP} package Rig \cite{Rig} respectively.
We note that they had no geometrical explanations before this.
\end{remark}

\begin{remark}
Reflections of a metric space also form a quandle under suitable conditions.
This quandle is called a Coxeter quandle (see \cite{FR1992} for example).
\end{remark}

\section{Presentation of a quandle}
\label{sec:presentation_of_a_quandle}

As well as groups, we have presentations of quandles.
Since we will utilize them for our arguments, we briefly recall some notions about presentations of quandles in this section.
We refer the reader to \cite{Kamada2017} for more details.

Let $S$ be a non-empty set, $F(S)$ the free group on $S$, and $FQ(S)$ the union of the conjugacy classes of $F(S)$ each of which contains an element of $S$.
It is easy to see that the binary operation $\ast$ on $FQ(S)$ given by
\[
 (g^{-1} s g) \ast (h^{-1} t h) = (g h^{-1} t h)^{-1} s (g h^{-1} t h) \quad (s, t \in S, \, g, h \in F(S))
\]
satisfies the axioms of a quandle.
We call this quandle the \emph{free quandle} on $S$.

For a given subset $R$ of $FQ(S) \times FQ(S)$, we consider to enlarge $R$ by repeating the following moves:
\begin{itemize}
\item[(a)]
For each $x \in FQ(S)$, add $(x, x)$ in $R$
\item[(b)]
For each $(x, y) \in R$, add $(y, x)$ in $R$
\item[(c)]
For each $(x, y), (y, z) \in R$, add $(x, z)$ in $R$
\item[(d)]
For each $(x, y) \in R$ and $s \in S$, add $(x \ast s, y \ast s)$ and $(x \; \overline{\ast} \; s, y \; \overline{\ast} \; s)$ in $R$
\item[(e)]
For each $(x, y) \in R$ and $z \in FQ(S)$, add $(z \ast x, z \ast y)$ and $(z \; \overline{\ast} \; x, z \; \overline{\ast} \; y)$ in $R$
\end{itemize}
Here, $x \; \overline{\ast} \; y$ denotes the element $(\ast \, y)^{-1}(x)$.
A \emph{consequence} of $R$ is an element of an expanded $R$ by a finite sequence of the above moves.
Let $x \sim_{R} y$ denote that $(x, y)$ is a consequence of $R$.
Then $\sim_{R}$ is obviously an equivalence relation on $FQ(S)$.
The quotient $FQ(S) / {\sim_{R}}$ inherits $\ast$ from $FQ(S)$, and $\ast$ on $FQ(S) / {\sim_{R}}$ still satisfies the axioms of a quandle.

A quandle $X$ is said to have a \emph{presentation} $\langle S \hskip 0.2em | \hskip 0.1em R \rangle$ if $X$ is isomorphic to the quandle $FQ(S) / {\sim_{R}}$.
We refer to an element of $S$ and $R$ as a \emph{generator} and a \emph{relation} of $\langle S \hskip 0.2em | \hskip 0.1em R \rangle$ respectively.
We write a consequence $x \sim_{R} y$ as $x = y$ and abbreviate a presentation $\langle \{ s_{1}, s_{2}, \dots, s_{n} \} \mid \{ r_{1}, r_{2}, \dots, r_{m} \} \rangle$ as $\langle s_{1}, s_{2}, \dots, s_{n} \mid r_{1}, r_{2}, \dots, r_{m} \rangle$ in the remaining.
Fenn and Rourke \cite{FR1992} essentially showed the following theorem which is similar to the Tietze's theorem for group presentations.
We will refer to the moves (T1), (T2) and their inverses as \emph{Tietze moves}.

\begin{theorem}
Assume that a quandle has two distinct presentations.
Then the presentations are related to each other by a finite sequence of the following moves or their inverses:
\begin{itemize}
\item[(T1)]
Choose a consequence of the set of relations, and then add it to the set of relations
\item[(T2)]
Choose an element $x$ of the free quandle on the set of generators, and then introduce a new generator $s$ in the set of generators and the new relation $s = x$ in the set of relations
\end{itemize}
\end{theorem}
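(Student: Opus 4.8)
The plan is to adapt the classical proof of Tietze's theorem for groups, routing both presentations through a common refinement. Write $X$ for the quandle, and let $\langle S_{1} \mid R_{1} \rangle$ and $\langle S_{2} \mid R_{2} \rangle$ be the two given presentations, with fixed isomorphisms $\varphi_{i} \colon FQ(S_{i}) / {\sim_{R_{i}}} \to X$. (We assume, as in the situations where the theorem will be applied, that the presentations are finite, so that all the choices below are finite in number.) Since $\varphi_{1}$ is surjective, for each $t \in S_{2}$ we may pick $w_{t} \in FQ(S_{1})$ whose $\sim_{R_{1}}$-class is sent by $\varphi_{1}$ to $\varphi_{2}$ of the class of $t$; symmetrically pick $u_{s} \in FQ(S_{2})$ for each $s \in S_{1}$. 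Note that $w_{t}$ involves only the generators in $S_{1}$ and $u_{s}$ only those in $S_{2}$.

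The first ingredient is the one-move lemma behind (T2): for $x$ in the free quandle on a set $S$ and a symbol $s' \notin S$, the presentation $\langle S \cup \{ s' \} \mid R \cup \{ s' = x \} \rangle$ is isomorphic to $\langle S \mid R \rangle$ via the map fixing $S$ and sending $s' \mapsto x$; and (T1) changes neither $\sim_{R}$ nor the presented quandle, since it only enlarges $R$ by consequences. Granting this, apply (T2) once for each $t \in S_{2}$, adjoining the generator $t$ together with the relation $t = w_{t}$. This carries $\langle S_{1} \mid R_{1} \rangle$ to $P_{1}' = \langle S_{1} \sqcup S_{2} \mid R_{1} \cup \{\, t = w_{t} \mid t \in S_{2} \,\} \rangle$, which by the lemma still presents $X$, through an isomorphism $\psi_{1}$ sending each $s \in S_{1}$ to $\varphi_{1}$ of its class and, by the choice of $w_{t}$, each $t \in S_{2}$ to $\varphi_{2}$ of its class.

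Next, enlarge the relation set of $P_{1}'$ by (T1) twice --- first adjoining all of $R_{2}$, then all of $\{\, s = u_{s} \mid s \in S_{1} \,\}$ --- reaching $P^{\ast} = \langle S_{1} \sqcup S_{2} \mid R_{1} \cup R_{2} \cup \{\, t = w_{t} \,\} \cup \{\, s = u_{s} \,\} \rangle$. Both enlargements are legitimate because the added pairs are already consequences: a pair $(p,q) \in R_{2}$ has $[p] = [q]$ in $FQ(S_{2}) / {\sim_{R_{2}}}$, hence $\psi_{1}([p]) = \psi_{1}([q])$ in $X$ (evaluating the words $p, q$ in $S_{2}$ and using that $\psi_{1}$ on $S_{2}$ is $\varphi_{2}$ after passing to the quotient), hence $[p] = [q]$ already in $P_{1}'$; and for $s \in S_{1}$, $\psi_{1}$ of the class of $u_{s}$ equals $\varphi_{2}$ of its $\sim_{R_{2}}$-class, which by the choice of $u_{s}$ equals $\varphi_{1}([s]) = \psi_{1}([s])$, so $s = u_{s}$ holds once $R_{2}$ has been adjoined. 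Running the mirror-image construction from $\langle S_{2} \mid R_{2} \rangle$ --- adjoin each $s \in S_{1}$ with $s = u_{s}$ by (T2), then adjoin $R_{1}$ and $\{\, t = w_{t} \,\}$ by (T1) --- produces the very same presentation $P^{\ast}$. Thus $\langle S_{1} \mid R_{1} \rangle$ reaches $P^{\ast}$ by a finite sequence of (T1) and (T2), and $P^{\ast}$ reaches $\langle S_{2} \mid R_{2} \rangle$ by the inverses of the moves of that mirror-image construction; each such $(T2)^{-1}$ is legal because, at the stage where the generator $s$ is deleted, the only relation involving $s$ is $s = u_{s}$, as the relations of $R_{2}$ and all the words $u_{s'}$ involve only the generators in $S_{2}$.

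The step I expect to be the main obstacle is the bookkeeping. Verifying the (T2) lemma and the two ``consequence'' claims comes down to checking that substituting $w_{t}$ for $t$ (and $u_{s}$ for $s$) throughout an element of a free quandle --- recalling that these elements are conjugacy classes, with the operation assembled from conjugation --- is genuinely effected by the closure moves (a)--(e) defining a consequence, and that the successive identifications of the presented quandle with $X$ keep track of the generators consistently. This is where the quandle setting differs in appearance from the group setting; the combinatorics beneath is the same, which is why Fenn and Rourke could essentially reduce it to the classical argument.
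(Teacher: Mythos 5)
The paper itself offers no proof of this theorem: it is quoted from Fenn and Rourke \cite{FR1992} (``essentially showed''), so there is nothing internal to compare your argument against. Judged on its own, your proposal is the standard common-refinement proof of Tietze's theorem transported to quandles, and it is correct in outline: the passage to $P^{\ast} = \langle S_{1} \sqcup S_{2} \mid R_{1} \cup R_{2} \cup \{ t = w_{t} \} \cup \{ s = u_{s} \} \rangle$ from either side, the verification that the adjoined relations are consequences via the isomorphisms $\psi_{i}$ onto $X$ (here it matters, and is true by the paper's definitions, that ``$[p]=[q]$ in the quotient'' and ``$(p,q)$ is a consequence of $R$'' are the same thing, since the quotient is defined as $FQ(S)/{\sim_{R}}$), and the legality of the $(T2)^{-1}$ steps because each $s \in S_{1}$ occurs only in its own relation $s = u_{s}$ once $R_{1}$ and $\{ t = w_{t} \}$ have been stripped off by $(T1)^{-1}$. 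The one-move (T2) lemma you defer is genuinely routine: the retraction sending $s' \mapsto x$ and the inclusion induce mutually inverse homomorphisms between $FQ(S)/{\sim_{R}}$ and $FQ(S \cup \{s'\})/{\sim_{R \cup \{s'=x\}}}$, both determined on generators. Two caveats are worth making explicit. First, your finiteness assumption is not merely for convenience: with (T1) and (T2) adding one relation or one generator at a time, a \emph{finite} sequence of moves cannot relate genuinely infinite presentations, so the theorem as literally stated needs either the finiteness hypothesis or moves that adjoin whole families at once; since the paper only applies the theorem to finite presentations, this is harmless here. Second, when you invoke $\psi_{1}([p]) = \varphi_{2}([p])$ for $p \in FQ(S_{2})$, you are using that two homomorphisms out of a free quandle agreeing on the generating set agree everywhere; this follows from the universal property of $FQ(S_{2})$ and is the precise point where the conjugacy-class description of free quandle elements enters, so it deserves the one line of justification you allude to in your closing paragraph.
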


Suppose that $S$ is a set and $X$ a quandle.
A map $f : S \rightarrow X$ naturally induces a homomorphism $f_{\sharp} : FQ(S) \rightarrow X$.
Further $f$ induces an well-defined homomorphism $f_{\ast} : FQ(S) / {\sim_{R}} \rightarrow X$ if we have $f_{\sharp}(x) = f_{\sharp}(y)$ for each relation $x = y$ in $R$.
We note that both $f_{\sharp}$ and $f_{\ast}$ are surjective if the image of $f$ generates $X$.

\section{Knot quandle of the twist-spun trefoil}
\label{sec:knot_quandle_of_the_twist-spun trefoil}

In this section, we review the twist-spun trefoil and its knot quandle rapidly.
We refer the reader \cite{Kamada2017} for more details.

Consider the oriented knotted arc $k$, depicted in the left-hand side of Figure \ref{fig:long_trefoil}, which is properly embedded in the upper half space $\mathbb{R}^{3}_{+}$.
Choose a 3-ball $B$ in $\mathbb{R}^{3}_{+}$ so that $B$ wholly contains the knotted part of $k$ (see the left-hand side of Figure \ref{fig:long_trefoil}).
We assume that $k$ intersects with $\partial B$ only at the north and south poles of $B$.
Suppose that $m$ is a positive integer.
Spin $\mathbb{R}^{3}_{+}$ 360 degrees in $\mathbb{R}^{4}$ along $\partial \mathbb{R}^{3}_{+}$, and simultaneously rotate $B$ $360 m$ degrees along the axis of $B$ passing through the north and south poles.
Then the locus of $k$ yields an oriented 2-knot after the one point compactification of $\mathbb{R}^{4}$.
We call this 2-knot the \emph{$m$-twist-spun trefoil}.
In the remaining, we let $\tau^{m} 3_{1}$ denote the $m$-twist-spun trefoil.
\begin{figure}[htbp]
 \centering
 \includegraphics[scale=0.25]{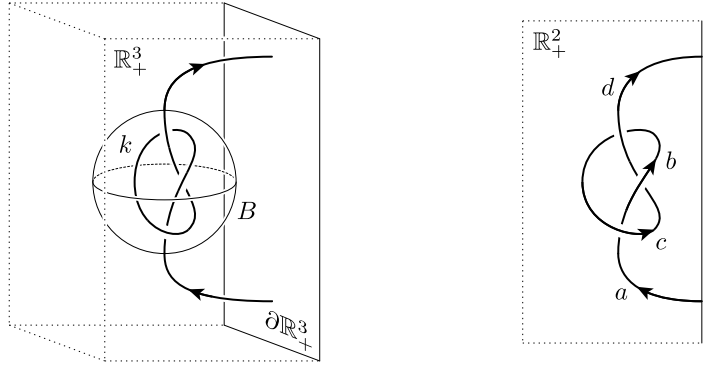}
 \caption{An oriented knotted arc $k$ called the long trefoil and a 3-ball $B$ which wholly contains the knotted part of $k$ (left), and a diagram of $k$ (right)}
 \label{fig:long_trefoil}
\end{figure}

It is known by Zeeman \cite{Zeeman1965} that $\tau^{m} 3_{1}$ is a fibered 2-knot (see \cite{Rolfsen1976}, for example, for fiberedness of a knot).
Therefore, in light of Corollary 3.2 of \cite{Inoue2019}, the knot quandle of $\tau^{m} 3_{1}$ is defined as follows, although it is different from the usual way.
Let $G_{m}$ be the fundamental group of a fiber of $\tau^{m} 3_{1}$, and $\varphi$ the monodromy of $\tau^{m} 3_{1}$ (which is an automorphism of $G_{m}$).
Then the \emph{knot quandle of the $m$-twist-spun trefoil} is defined to be $G_{m}$ equipped with the binary operation $\ast$ given by $x \ast y = \varphi(x y^{-1}) y$.
In the remaining, we let $Q_{m}$ denote the knot quandle of $\tau^{m} 3_{1}$, although $Q_{m}$ coincides with $G_{m}$ as sets.
We will count $G_{m}$ as a group and $Q_{m}$ as a quandle.

We here study about $G_{m}$ and $\varphi$ for subsequent arguments.
Since a fiber of $\tau^{m} 3_{1}$ has the surgery description depicted in Figure \ref{fig:surgery_description}, we have the presentation
\[
 \langle \gamma_{1}, \gamma_{2}, \dots, \gamma_{m} \mid \gamma_{1} = \gamma_{2} \gamma_{m}, \, \gamma_{2} = \gamma_{3} \gamma_{1}, \, \dots, \, \gamma_{m-1} = \gamma_{m} \gamma_{m-2}, \, \gamma_{m} = \gamma_{1} \gamma_{m-1} \rangle
\]
of $G_{m}$ (see \cite{Rolfsen1976}, for example, for a surgery description of a 3-manifold).
Here, $\gamma_{i}$ is the loop depicted in Figure \ref{fig:surgery_description}.
It is easy to see that $\varphi$ maps $\gamma_{i}$ to $\gamma_{i+1}$ ($1 \leq i \leq m-1$) and $\gamma_{m}$ to $\gamma_{1}$.
Let $\delta$ be the loop depicted in Figure \ref{fig:surgery_description}.
Since
\[
 \delta
 = \gamma_{2} \gamma_{1}^{-1} \gamma_{2}^{-1} \gamma_{1}
 = \dots
 = \gamma_{m} \gamma_{m-1}^{-1} \gamma_{m}^{-1} \gamma_{m-1}
 = \gamma_{1} \gamma_{m}^{-1} \gamma_{1}^{-1} \gamma_{m},
\]
$\varphi(\delta)$ and $\delta$ are the same element in $G_{m}$.
We note that $\delta \neq 1$ if and only if $m \geq 3$.
\begin{figure}[htbp]
 \centering
 \includegraphics[scale=0.25]{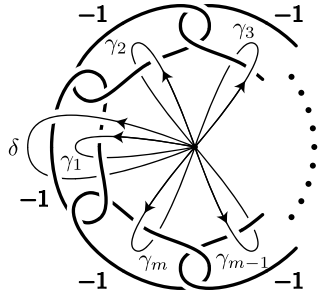}
 \caption{A surgery description of a fiber of $\tau^{m} 3_{1}$ (thick lines) and typical elements of $G_{m}$ (thin lines)}
 \label{fig:surgery_description}
\end{figure}

Since $k$ has the diagram depicted in the right-hand side of Figure \ref{fig:long_trefoil}, it is known by Satoh \cite{Satoh2002} that $Q_{m}$ has presentations
\begin{align}
 & \hskip 1.3em \langle a, b, c, d \mid a \ast c = b, \, b \ast d = c, \, c \ast b = d, \, b \ast^{m} a = b, \, c \ast^{m} a = c \rangle \notag \\
 & = \langle a, b, c \mid a \ast c = b, \, (b \ast c) \ast b = c, \, b \ast^{m} a = b, \, c \ast^{m} a = c \rangle \notag \\
 & = \langle a, c \mid (a \ast c) \ast a = c, \, c \ast^{m} a = c \rangle. \label{eq:presentation_of_knot_quandle}
\end{align}
Here, equality of presentations means being related to each other by Tietze moves, and $\ast^{m} \, a$ the $m$ times iteration of $\ast \, a$.
Since $Q_{m}$ is generated by the set $\{ a, c \}$ and $c$ is equal to $(a \ast c) \ast a$, $Q_{m}$ is obviously connected.
Studying works \cite{Inoue2019} and \cite{Satoh2002}, we know that
\[
 a = 1, \enskip
 b = \gamma_{m}, \enskip
 c = \gamma_{1}, \enskip
 d = \delta.
\]

\section{Relationships between Schl\"{a}fli quandles and knot quandles}
\label{sec:relationships_between_Schlafli_quandles_and_knot_quandles}

In this section, we study some relationships between Schl\"{a}fli quandles and knot quandles of twist-spun trefoils.
We start with showing the following theorem:

\begin{theorem}
\label{thm:main1}
The 16-, 24-, or 600-cell quandle is respectively isomorphic to the knot quandle of the 3-, 4-, or 5-twist-spun trefoil.
\end{theorem}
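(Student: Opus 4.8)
The plan is to realize the $16$-, $24$-, or $600$-cell quandle $X$ (for $m=3,4,5$ respectively) as a homomorphic image of $Q_{m}$ via the two-generator presentation \eqref{eq:presentation_of_knot_quandle}, $Q_{m}=\langle a,c\mid(a\ast c)\ast a=c,\ c\ast^{m}a=c\rangle$, and then to upgrade the resulting epimorphism to an isomorphism by counting. Since $Q_{m}$ coincides with $G_{m}$ as a set, $|Q_{m}|=|G_{m}|$, and a Tietze reduction of the surgery presentation of $G_{m}$ recalled in Section~\ref{sec:knot_quandle_of_the_twist-spun trefoil} identifies $G_{3},G_{4},G_{5}$ with the quaternion group, the binary tetrahedral group, and the binary icosahedral group, of orders $8,24,120$ (for $m=5$, $G_{5}$ is $\pi_{1}$ of the Poincar\'{e} homology sphere, the fiber of $\tau^{5}3_{1}$ in Zeeman's description; for $m=3,4$ these orders are also obtained by the computer calculation of Clark et al.\ \cite{CSV2016}). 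These coincide with $|V_{\{3,3,4\}}|=8$, $|V_{\{3,4,3\}}|=24$, $|V_{\{3,3,5\}}|=120=|X|$, so it suffices to produce a quandle epimorphism $Q_{m}\to X$.

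To build one, fix for each $m\in\{3,4,5\}$ two vertices $v_{a},v_{c}$ of the corresponding polytope and set $\alpha=(v_{a},r_{v_{a}})$ and $\gamma=(v_{c},r_{v_{c}})$ in $X$, reading the rotations $r_{v}$ off the matrices $R_{v}$ displayed in Section~\ref{sec:Schlafli_quandle}. Since the operation of a Schl\"{a}fli quandle is $(v,r_{v})\ast(w,r_{w})=(r_{w}(v),\,r_{w}r_{v}r_{w}^{-1})$ by the condition \eqref{eq:condition}, verifying the relator $(a\ast c)\ast a=c$ is a single $4\times4$ matrix--vector computation, while verifying $c\ast^{m}a=c$ amounts to checking that $v_{c}$ is fixed by $r_{v_{a}}^{m}$ together with the matching identity among the matrices $R_{v}$. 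By the well-definedness criterion for $f_{\ast}$ at the end of Section~\ref{sec:presentation_of_a_quandle}, a choice of $v_{a},v_{c}$ passing these tests yields a quandle homomorphism $\Phi\colon Q_{m}\to X$, whose image is the subquandle generated by $\{\alpha,\gamma\}$. As $X$ is connected and finite, a finite orbit computation shows that this subquandle is all of $X$, so $\Phi$ is surjective, hence --- by the count above --- an isomorphism. For bookkeeping it helps to observe that the vertex sets $V_{\{3,3,4\}},V_{\{3,4,3\}},V_{\{3,3,5\}}$ are, inside the unit sphere of $\mathbb{H}=\mathbb{R}^{4}$, exactly the quaternion, binary tetrahedral, and binary icosahedral groups, so the required rotation identities become transparent rather than miraculous.

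The main obstacle is the concrete computation behind the previous paragraph: exhibiting suitable $v_{a},v_{c}$ and checking $(\alpha\ast\gamma)\ast\alpha=\gamma$, $\gamma\ast^{m}\alpha=\gamma$, and surjectivity, is immediate for the $16$-cell but genuinely laborious for the $600$-cell --- a quandle on $120$ vertices, a fivefold iterated operation, and ten vertices attached to each of only twelve distinct rotation matrices --- so the three cases should be run in parallel through the quaternionic description of $\mathrm{SO}(4)$ rather than by brute force on $4\times4$ matrices. A smaller technical point is the Tietze reduction giving $|G_{4}|=24$; one may instead simply quote the order of $G_{4}$ from \cite{CSV2016}.
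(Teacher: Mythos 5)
Your proposal is correct and follows essentially the same route as the paper: construct an epimorphism $Q_{m}\to X$ by sending the two generators $a,c$ of the presentation (\ref{eq:presentation_of_knot_quandle}) to a suitable pair $(v,r_{v}),(w,r_{w})$, verify the two relations and that the pair generates $X$, and conclude by comparing $|Q_{m}|=|G_{m}|\in\{8,24,120\}$ with $|X|$. The only content you leave implicit that the paper supplies is the explicit choice of the adjacent vertex pair in each case (e.g.\ $(\vect{e}_{1},\vect{e}_{2})$ for the 16-cell), and your parenthetical identification of $V_{\{3,4,3\}}$ with the binary tetrahedral group is only true up to rescaling and rotation, but neither point affects the argument.
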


\begin{proof}
Choose adjacent vertices $v, w$ of the regular tessellation $\{ 3, 3, 4 \}$, $\{ 3, 4, 3 \}$ or $\{ 3, 3, 5 \}$ as follows:
\[
 (v, w) =
 \begin{cases}
  (\vect{e}_{1}, \vect{e}_{2}) & \text{if $\{ 3, 3, 4 \}$}, \\
  (\vect{e}_{1} + \vect{e}_{2}, \vect{e}_{2} + \vect{e}_{4}) & \text{if $\{ 3, 4, 3 \}$}, \\[0.5ex]
  \left( \vect{e}_{1}, - \dfrac{1}{2} \left( \phi^{-1} \hskip 0.05em \vect{e}_{1} + \phi \hskip 0.15em \vect{e}_{3} - \vect{e}_{4} \right) \right) & \text{if $\{ 3, 3, 5 \}$}.
 \end{cases}
\]
Let $X$ be the 16-, 24-, or 600-cell quandle, and $m$ respectively equal to 3, 4, or 5.
Then it is routine to check that the set $\{ (v, r_{v}), (w, r_{w}) \}$ generates $X$ and we have
\[
 ((v, r_{v}) \ast (w, r_{w})) \ast (v, r_{v}) = (w, r_{w}), \enskip
 (w, r_{w}) \ast^{m} (v, r_{v}) = (w, r_{w}).
\]
We thus have the epimorphism $f_{\ast} : Q_{m} \to X$ which sends $a$ and $c$ to $(v, r_{v})$ and $(w, r_{w})$ respectively.
It is known that the cardinality of $G_{m}$ (i.e., of $Q_{m}$) is equal to 8, 24, or 120 respectively (see Section 10.D of \cite{Rolfsen1976} for example).
Since this number is equal to the cardinality of $X$, $f_{\ast}$ is not only an epimorphism but an isomorphism.
\end{proof}

\begin{remark}
\label{rem:knot_quandle_of_twist-spun_trefoil}
Since $\tau^{1} 3_{1}$ is equivalent to the trivial 2-knot \cite{Zeeman1965}, $Q_{1}$ is the quandle of order 1.
Rourke and Sanderson \cite{RS2002} pointed out that $Q_{2}$ is isomorphic to the dihedral quandle of order 3 (i.e., the Schl\"{a}fli quandle related to $\{ 3, 2 \}$).
In light of Theorem 4.1 of \cite{Inoue2019}, we know that the cardinality of $Q_{m}$ is infinite if $m \geq 6$.
\end{remark}

We next make discussion on a central extension.
Let $\widetilde{X}$ and $X$ be quandles and $A$ a non-trivial abelian group.
Suppose that $A$ acts on $\widetilde{X}$ from the left.
Then $\widetilde{X}$ is said to be a \emph{central extension} of $X$ if there is an epimorphism $p : \widetilde{X} \to X$ satisfying the following conditions \cite{Eisermann2003}:
\begin{itemize}
\item[(E0)]
For each $\widetilde{w}, \widetilde{x}, \widetilde{y} \in \widetilde{X}$, $p(\widetilde{x}) = p(\widetilde{y})$ implies $\widetilde{w} \ast \widetilde{x} = \widetilde{w} \ast \widetilde{y}$
\item[(E1)]
For each $\widetilde{x}, \widetilde{y} \in \widetilde{X}$ and $\alpha \in A$, $(\alpha \widetilde{x}) \ast \widetilde{y} = \alpha (\widetilde{x} \ast \widetilde{y})$ and $\widetilde{x} \ast (\alpha \widetilde{y}) = \widetilde{x} \ast \widetilde{y}$
\item[(E2)]
For each $x \in X$, $A$ acts on the fiber $p^{-1}(x)$ freely and transitively
\end{itemize}
A central extension is also called an \emph{abelian extension} (see \cite{CENS2003} for example).
As well as groups, central extensions of a quandle are closely related to the second cohomology group of the quandle \cite{CENS2003, Eisermann2003}.

Recall that tetrahedral and octahedral quandles are the Schl\"{a}fli quandles related to $\{ 3, 3 \}$ and $\{ 3, 4 \}$ respectively.
It is known by Clark et al.\ \cite{CSV2016} with computer calculation that $\SmallQuandle{8}{1}$ and $\SmallQuandle{24}{2}$ are respectively central extensions of tetrahedral and octahedral quandles.
Thus, in light of Remark \ref{rem:RIG} and Theorem \ref{thm:main1}, we know that $Q_{m}$ is a central extension of the Schl\"{a}fli quandle related to $\{ 3, m \}$ if $m$ is equal to 3 or 4.
This relationship between $Q_{m}$ and the Schl\"{a}fli quandle related to $\{ 3, m \}$ is lasting as follows:

\begin{theorem}
\label{thm:main2}
The knot quandle of the $m$-twist-spun trefoil is a central extension of the Schl\"{a}fli quandle related to $\{ 3, m \}$ if $m \geq 3$.
\end{theorem}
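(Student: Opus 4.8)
The approach is to factor the desired extension through the quotient of $Q_m$ by $\langle\delta\rangle$. Write $\mathcal S$ for the Schl\"{a}fli quandle related to $\{3,m\}$; the plan is to produce
\[
 Q_m \xrightarrow{\ p\ } Q_m/\langle\delta\rangle \xrightarrow{\ \bar f\ } \mathcal S ,
\]
to show that $p$ is a central extension with group $A=\langle\delta\rangle$, and to show that $\bar f$ is an isomorphism.

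First I would set up $p$. Since $Q_m$ is the group $G_m$ with operation $x\ast y=\varphi(xy^{-1})y$ and $\varphi(\delta)=\delta$, a one-line computation gives $(\delta^{k}x)\ast(\delta^{\ell}y)=\delta^{k}(x\ast y)$ for all $x,y\in G_m$ and all $k,\ell\in\mathbb Z$; in particular $\langle\delta\rangle$, acting by left multiplication, is compatible with $\ast$, so the set $\langle\delta\rangle\backslash G_m$ of left cosets inherits a quandle structure $Q_m/\langle\delta\rangle$, the coset map $p$ is a quandle epimorphism, and the displayed identity yields conditions (E0) and (E1) for $A=\langle\delta\rangle$. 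Using that $Q_m$ is connected, I would check that the quandle congruence generated by the single pair $(\delta,1)=(d,a)$ is exactly ``lying in one left $\langle\delta\rangle$-coset'', so that the fibres of $p$ are those cosets; since $\delta\neq1$ precisely when $m\geq3$, the abelian group $A$ is then non-trivial and acts on each fibre freely and transitively, which is (E2). Hence $p$ is a central extension. Next, adjoining the relation $d=a$, i.e.\ $\delta=1$, to the presentations~(\ref{eq:presentation_of_knot_quandle}) of $Q_m$ and reducing by Tietze moves should give
\[
 Q_m/\langle\delta\rangle = \langle\, a,c \mid (a\ast c)\ast a=c,\ \ c\ast(a\ast c)=a,\ \ c\ast^{m}a=c \,\rangle ,
\]
the new middle relation being ``$\delta=1$'' written out through $\delta=d=c\ast(a\ast c)$.

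Now I would construct $\bar f$. Fix an edge $vw$ of $\{3,m\}$, orient the ambient surface so that every $r_u$ is the counterclockwise $2\pi/m$-rotation about $u$, and let $v'=r_w(v)$ be the third vertex of the triangular face of $\{3,m\}$ bounded at $w$ by $wv$ and $wv'$ in this cyclic order. From the geometry of this one triangular face, together with $r_u^{\,m}=\mathrm{id}$, one reads off $r_v\bigl(r_w(v)\bigr)=w$, $r_{r_w(v)}(w)=v$ and $r_v^{\,m}=\mathrm{id}$, which says exactly that $\bigl((v,r_v),(w,r_w)\bigr)$ satisfies the three relations displayed above; and since every vertex of $\{3,m\}$ is reached by walking along edges, $\{(v,r_v),(w,r_w)\}$ generates $\mathcal S$. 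By the remarks at the end of Section~\ref{sec:presentation_of_a_quandle} this gives an epimorphism $\bar f\colon Q_m/\langle\delta\rangle\to\mathcal S$ with $a\mapsto(v,r_v)$, $c\mapsto(w,r_w)$.

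The main obstacle is the injectivity of $\bar f$ --- equivalently, showing that the three relations above actually present $\mathcal S$. Here I would use that $\{3,m\}$ tessellates a \emph{simply connected} surface: encoding a quandle word in $(v,r_v),(w,r_w)$ as an edge path in the $1$-skeleton of $\{3,m\}$, any relation of $\mathcal S$ becomes a null-homotopic loop, hence a product of boundaries of triangular faces, and each triangular-face boundary only forces relations that are consequences of the face relation $(a\ast c)\ast a=c$ and the vertex relation $c\ast^{m}a=c$. For $m\in\{3,4,5\}$ one can instead count: $|Q_m|=8,24,120$ and a finite computation in $G_m$ gives $|\langle\delta\rangle|=2,4,10$, so $|Q_m/\langle\delta\rangle|=4,6,12=|\mathcal S|$ and the epimorphism $\bar f$ is forced to be bijective (this recovers the computations of Clark et al.\ for $m=3,4$ via Theorem~\ref{thm:main1}). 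Once $\bar f$ is an isomorphism, the composite $\bar f\circ p\colon Q_m\to\mathcal S$ exhibits $Q_m$ as a central extension of $\mathcal S$ with group $\langle\delta\rangle$, which is the theorem. I expect the delicate part to be making the simply-connectedness argument fully precise for $m\geq6$, where both quandles are infinite and the cardinality shortcut is unavailable.
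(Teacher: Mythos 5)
Your architecture matches the paper's proof in all essentials: an epimorphism onto the Schl\"afli quandle determined by the presentations, identification of its fibres with the orbits of $A=\langle\delta\rangle$ acting by left multiplication, and verification of (E0)--(E2). The pieces you actually carry out are sound. The identity $(\delta^{k}x)\ast(\delta^{\ell}y)=\delta^{k}(x\ast y)$ is correct (it needs only $\varphi(\delta)=\delta$, not centrality of $\delta$ in $G_{m}$), it gives (E0) and (E1), and it also shows that the left-coset relation is itself a quandle congruence containing $(\delta,1)=(d,a)$; combined with connectedness and the fact that inner automorphisms commute with the $\delta$-action, your claim that the congruence generated by $(d,a)$ is exactly the coset relation does go through, so the quotient $Q_{m}/\langle\delta\rangle$ and condition (E2) are fine. (The paper reaches the same conclusion by a more explicit route, exhibiting for each $g\in\mathrm{Inn}(Q_{m})$ an inner automorphism $\widehat{g}$ with $\widehat{g}^{k}(g(1))=g(\delta^{k})$.)

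The genuine gap is the injectivity of $\bar f$, equivalently the claim that
\[
\langle\, v,w \mid (v\ast w)\ast v=w,\ \ w\ast(v\ast w)=v,\ \ w\ast^{m}v=w \,\rangle
\]
is a presentation of the Schl\"afli quandle related to $\{3,m\}$ for \emph{every} $m$. This is precisely Lemma \ref{lem:presentation} of the paper, and it is the heart of the theorem: for $m=3,4$ the result was already known from Clark et al., and $m=5$ follows from Theorem \ref{thm:main1} plus your cardinality count, so the only new content is $m\geq 6$ --- exactly the case your sketch defers. The paper proves the lemma by a geometric rigidity argument: the relations force the elements of the presented quandle to assemble into triangles, the relation $w\ast^{m}v=w$ together with bijectivity of $\ast\,v$ forces exactly $m$ triangles around each element, and the only such arrangement is $\{3,m\}$, so the presented quandle can be no larger than the vertex set and the canonical epimorphism onto the Schl\"afli quandle is a bijection. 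Your proposed substitute --- encode a relation of $\mathcal S$ as a null-homotopic edge loop and write it as a product of face boundaries --- is not carried out, and it is not routine to set up: a consequence of a set of quandle relations is generated by the moves (a)--(e) of Section \ref{sec:presentation_of_a_quandle}, and move (e), which modifies the \emph{operator} rather than the operand, has no immediate interpretation as an edge path in the $1$-skeleton, so the dictionary between quandle consequences and homotopies of loops would itself have to be constructed. As written, the proposal proves the theorem only for $m\in\{3,4,5\}$ and leaves the general case to an unexecuted strategy.
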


To show the theorem, we first prepare the following lemma:

\begin{lemma}
\label{lem:presentation}
The Schl\"{a}fli quandle related to $\{ 3, m \}$ {\upshape (}$m \geq 2${\upshape )} has presentations
\begin{align}
 & \hskip 1.3em \langle v, w \mid (v \ast w) \ast v = w, \, (w \ast v) \ast w = v, \, w \ast^{m} v = w \rangle \notag \\
 & = \langle v, w \mid (v \ast w) \ast v = w, \, (((v \ast w) \ast v) \ast v) \ast w = v, \, w \ast^{m} v = w \rangle. \label{eq:presentation_of_Schlafli_quandle}
\end{align}
\end{lemma}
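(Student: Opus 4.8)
The plan is to show that the two displayed presentations define the same quandle and that this quandle is the Schl\"{a}fli quandle $X$ related to $\{3,m\}$. Write $P$ for the quandle with the first presentation. The equality of the two presentations is a short Tietze calculation: using the relation $(v\ast w)\ast v=w$, one sees that $(((v\ast w)\ast v)\ast v)\ast w$ and $(w\ast v)\ast w$ are the same consequence, so the second relation of each presentation is a consequence of the relations of the other, and hence the two relation sets have the same consequences.

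To identify $P$ with $X$, I would first produce an epimorphism $\pi\colon P\to X$. Choose vertices $v_{0},w_{0}$ of $\{3,m\}$ bounding an edge of a triangle and send $v\mapsto(v_{0},r_{v_{0}})$, $w\mapsto(w_{0},r_{w_{0}})$, with all $r_{u}$ the counterclockwise $2\pi/m$ rotation about $u$ taken coherently. The three relations of $P$ hold in $X$: one has $w\ast^{m}v=w$ because $r_{v_{0}}^{m}=\mathrm{id}$, and since every angle of the triangle $v_{0}w_{0}u$ with $u=r_{w_{0}}(v_{0})$ equals $2\pi/m$, rotating $u$ by $2\pi/m$ about $v_{0}$ recovers $w_{0}$, which gives $(v\ast w)\ast v=w$ and symmetrically $(w\ast v)\ast w=v$. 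Moreover $\{(v_{0},r_{v_{0}}),(w_{0},r_{w_{0}})\}$ generates $X$, since iterating $\ast(v_{0},r_{v_{0}})$ on $(w_{0},r_{w_{0}})$ runs through all neighbours of $v_{0}$, similarly for $w_{0}$, and one propagates along the connected $1$-skeleton of $\{3,m\}$. Thus $\pi$ is a well-defined epimorphism.

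The heart of the proof is the injectivity of $\pi$, which I would establish through inner automorphism groups. First, $P$ is connected: $w=(v\ast w)\ast v$ lies in the $\mathrm{Inn}(P)$-orbit of $v$, so that orbit is a subquandle containing both generators and hence equals $P$. Writing $S_{x}$ for the inner automorphism $\ast\,x$, the relations of $P$ yield, in $\mathrm{Inn}(P)$: $S_{v}S_{w}S_{v}=S_{w}S_{v}S_{w}$ (from $(v\ast w)\ast v=w$); $S_{v}^{m}=S_{w}^{m}=\mathrm{id}$ (from $w\ast^{m}v=w$ together with $v\ast^{m}v=v$, since each side fixes both generators); and---crucially using the extra relation $(w\ast v)\ast w=v$---that $S_{v}S_{w}S_{v}$ interchanges $v$ and $w$, whence $(S_{v}S_{w}S_{v})^{2}=\mathrm{id}$. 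Therefore $\mathrm{Inn}(P)$ is a quotient of $\langle s,t\mid sts=tst,\ s^{m}=1,\ (sts)^{2}=1\rangle$, which is the von Dyck group $D(2,3,m)$ (obtained from $B_{3}$ by killing the centre and then imposing order $m$ on the image of $\sigma_{1}$). On the other hand $\mathrm{Inn}(X)$ is the rotation group of $\{3,m\}$, which is exactly $D(2,3,m)$, with vertex stabiliser $\langle r_{v_{0}}\rangle$ cyclic of order $m$. Since $D(2,3,m)$ is Hopfian (finite when $m\le5$, and residually finite, being crystallographic or Fuchsian, otherwise), the composite $D(2,3,m)\twoheadrightarrow\mathrm{Inn}(P)\xrightarrow{\ \pi_{\ast}\ }\mathrm{Inn}(X)=D(2,3,m)$ is an isomorphism; as $\pi_{\ast}$ is also surjective, being induced by the epimorphism $\pi$, it follows that $\pi_{\ast}$ is an isomorphism. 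Then $\mathrm{Stab}_{\mathrm{Inn}(P)}(v)=\langle S_{v}\rangle$ (it contains $S_{v}$ and is carried into $\langle r_{v_{0}}\rangle$ by the isomorphism $\pi_{\ast}$), so $\pi$ descends to the $\pi_{\ast}$-equivariant map of homogeneous spaces $P\cong\mathrm{Inn}(P)/\langle S_{v}\rangle\to\mathrm{Inn}(X)/\langle r_{v_{0}}\rangle\cong X$, which is a bijection; being also a quandle homomorphism, $\pi$ is an isomorphism.

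I expect the main obstacle to be this inner-automorphism bookkeeping---in particular the identity $(S_{v}S_{w}S_{v})^{2}=\mathrm{id}$, which is exactly where the third relation of the Schl\"{a}fli presentation (the one absent from the presentation of $Q_{m}$) does its work---together with the correct handling of orientations in the triangle-relation check for $\pi$. As a consistency check, for $m\le5$ one can instead confirm the lemma by enumerating the finite quandle $P$ directly from its relations.
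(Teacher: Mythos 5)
Your proof is correct in outline, but it takes a genuinely different route from the paper's. The paper argues directly and geometrically: starting from the first presentation it shows that the triples $(v, v\ast w, w)$, $(v, w\ast^{k-1}v, w\ast^{k}v)$, etc.\ are forced to form triangles, that the relation $w\ast^{m}v=w$ forces exactly $m$ triangles around each element, and then concludes that the resulting arrangement can only be the tessellation $\{3,m\}$ itself. Your argument instead factors everything through $\mathrm{Inn}$: you check that the three relations force $S_{v}S_{w}S_{v}=S_{w}S_{v}S_{w}$, $S_{v}^{m}=1$ and $(S_{v}S_{w}S_{v})^{2}=1$ (the last being precisely where the second relation enters, via the fact that $S_{v}S_{w}S_{v}$ swaps the generators), identify the resulting quotient of $B_{3}$ with the von Dyck group $D(2,3,m)$, and use Hopficity to upgrade the chain of surjections $D(2,3,m)\twoheadrightarrow\mathrm{Inn}(P)\twoheadrightarrow\mathrm{Inn}(X)\cong D(2,3,m)$ to isomorphisms, after which the comparison of homogeneous spaces $\mathrm{Inn}(P)/\langle S_{v}\rangle\to\mathrm{Inn}(X)/\langle r_{v_{0}}\rangle$ gives injectivity of $\pi$. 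The trade-off is clear: the paper's proof is shorter and visual but rests on the informally justified claim that ``there is no such an arrangement of triangles other than $\{3,m\}$ up to ambient isotopy,'' whereas yours replaces that step by standard group theory (residual finiteness of triangle groups) at the cost of a few auxiliary verifications you should make explicit if you write it up --- notably that the vertex rotations generate all of $D(2,3,m)$ (a normal subgroup containing the order-$m$ vertex rotation must be everything, since killing it abelianizes $a^{2}=b^{3}=ab=1$ to the trivial group) and that $\mathrm{Inn}(X)$ injects into $\mathrm{Sym}(V)$ so that its vertex stabiliser really is $\langle r_{v_{0}}\rangle$. Both approaches are valid; yours is arguably the more airtight of the two.
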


\begin{proof}
Consider the quandle $X$ having the former presentation.
Since we have the first relation and
\[
 w \ast (v \ast w)
 = (w \ast v) \ast w
 = v,
\]
the triple $(v, v \ast w, w) \in X^{3}$ forms a triangle as depicted in Figure \ref{fig:triangles} (a).
Further since we have the second relation and
\[
 v \ast (w \ast v)
 = (v \ast w) \ast v
 = w,
\]
the triple $(v, w, w \ast v) \in X^{3}$ also forms a triangle (see Figure \ref{fig:triangles} (b)).
In a similar way, each triple $(v, w \ast^{k-1} v, w \ast^{k} v) \in X^{3}$ ($2 \leq k \leq m$) again forms a triangle (see Figure \ref{fig:triangles} (c)).
Since we have the third relation, these triangles should be allocated as depicted in Figure \ref{fig:triangles} (d).
Recall that $\ast \, v$ is bijective.
\begin{figure}[htbp]
 \centering
 \includegraphics[scale=0.20]{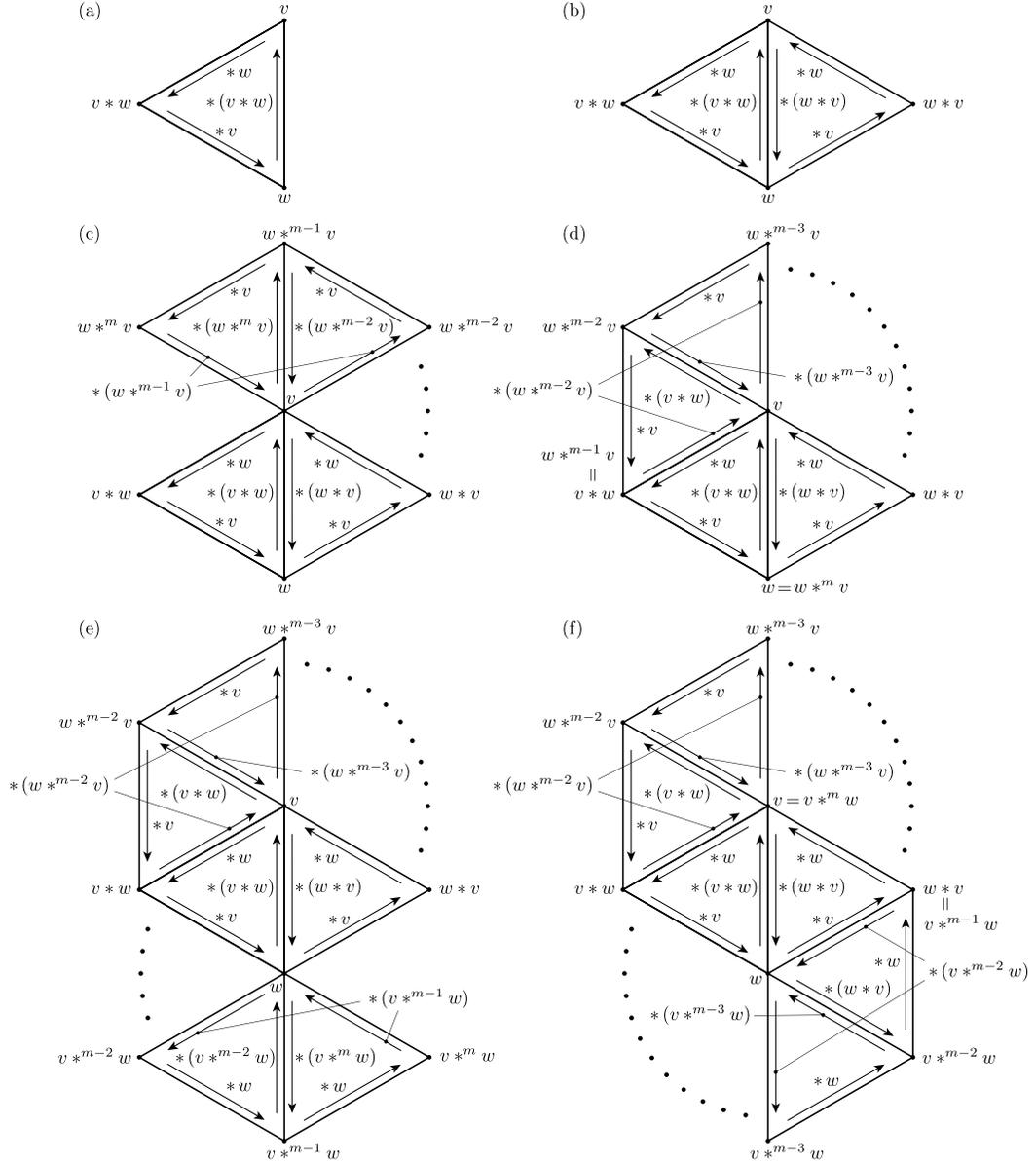}
 \caption{$X$ forms the regular tessellation $\{ 3, m \}$}
 \label{fig:triangles}
\end{figure}

In the same manner, each triple $(w, v \ast^{k-1} w, v \ast^{k} w) \in X^{3}$ ($1 \leq k \leq m$) forms a triangle (see Figure \ref{fig:triangles} (e)).
Since
\begin{align*}
 w \ast v
 & = (w \ast v) \ast (w \ast v) \\
 & = (w \ast^{m+1} v) \ast (w \ast v) \\
 & = (w \ast v) \ast^{m} (v \ast (w \ast v)) \\
 & = (w \ast v) \ast^{m} ((v \ast w) \ast v) \\
 & = (w \ast v) \ast^{m} w,
\end{align*}
the triangles should be allocated as depicted in Figure \ref{fig:triangles} (f).
Repeating a parallel argument sequentially, we know that there are just $m$ triangles around each element of $X$.
Since there is no such an arrangement of triangles other than $\{ 3, m \}$ up to ambient isotopy, $X$ should be isomorphic to the Schl\"{a}fli quandle related to $\{ 3, m \}$.

The latter presentation is obviously related to the former by Tietze moves.
\end{proof}

\begin{proof}[Proof of Theorem \ref{thm:main2}]
Let $X$ be the Schl\"{a}fli quandle related to $\{ 3, m \}$ ($m \geq 2$).
Since $Q_{m}$ and $X$ respectively have the presentations (\ref{eq:presentation_of_knot_quandle}) and (\ref{eq:presentation_of_Schlafli_quandle}), we have the epimorphism $p : Q_{m} \to X$ sending $a$ and $c$ to $v$ and $w$ respectively.

For each $g \in \mathrm{Inn}(Q_{m})$, let us consider the inner automorphism
\begin{align*}
 \widehat{g}
 & = (\ast \, g(c)) \circ (\ast \, g(a)) \circ (\ast \, g(a)) \circ (\ast \, g(c)) \\
 & = (\ast \, g(\gamma_{1})) \circ (\ast \, g(1)) \circ (\ast \, g(1)) \circ (\ast \, g(\gamma_{1}))
\end{align*}
of $Q_{m}$.
Then it is routine to check that we have
\begin{equation}
 \widehat{g}^{k}(g(1)) = g(\delta^{k}), \enskip
 \widetilde{w} \ast g(1) = \widetilde{w} \ast g(\delta^{k}) \label{eq:relations}
\end{equation}
for each $k \in \mathbb{Z}$ and $\widetilde{w} \in Q_{m}$.
Thus for each $x \in X$ and $\widetilde{x}, \widetilde{x}^{\prime} \in p^{-1}(x)$ there is an integer $k$ satisfying $\widetilde{x}^{\prime} = \delta^{k} \widetilde{x}$.
Indeed, we obtain a presentation of $X$ from (\ref{eq:presentation_of_knot_quandle}) by adding the relation $(((a \ast c) \ast a) \ast a) \ast c = a$.
The expressions (\ref{eq:relations}) tell us that this addition of the relation only introduces $g(\delta^{k}) = g(\delta^{l})$ for each $g \in \mathrm{Inn}(Q_{m})$ and $k, l \in \mathbb{Z}$ ($k \neq l$) as a new consequence of the set of relations.
It is easy to check that $g(\delta^{l})$ and $\delta^{l - k} g(\delta^{k})$ are the same element in $Q_{m}$, since $\varphi(\delta) = \delta$.
Recall that $Q_{m}$ is connected.

Let $A = \langle \delta \rangle$ be the abelian subgroup of $G_{m}$.
Then $A$ surely acts on $G_{m}$ (i.e., on $Q_{m}$) from the left.
Obviously $p$ satisfies conditions (E0) and (E2).
Further it is routine to see that $p$ also satisfies the condition (E1).
Since $A$ is non-trivial if $m \geq 3$, we obtain the claim.
\end{proof}

We conclude the paper with a question.
Suppose that $A$ is the above one.
Since both of cardinalities of $Q_{m}$ and the Schl\"{a}fli quandle related to $\{ 3, m \}$ are finite if $m$ is equal to 3, 4, or 5, we know that the order of $A$ is respectively 2, 4, or 10.
On the other hand, both of cardinalities of $Q_{m}$ and the Schl\"{a}fli quandle related to $\{ 3, m \}$ are infinite if $m \geq 6$.
What is the order of $A$ if $m \geq 6$?

\section*{Acknowledgments}
The author is supported by JSPS KAKENHI Grant Numbers JP16K17591 and JP19K03476 partially.

\bibliographystyle{amsplain}

\begin{thebibliography}{99}

\bibitem{CENS2003}
J.\ S.\ Carter, M.\ Elhamdadi, M.\ A.\ Nikiforou and M.\ Saito,
{\it Extensions of quandles and cocycle knot invariants},
J.\ Knot Theory Ramifications {\bf 12} (2003), no.\ 6, 725--738.

\bibitem{CSV2016}
W.\ E.\ Clark, M.\ Saito and L.\ Vendramin,
{\it Quandle coloring and cocycle invariants of composite knots and abelian extensions},
J.\ Knot Theory Ramifications {\bf 25} (2016), no.\ 5, 1650024, 34 pp.

\bibitem{Eisermann2003}
M.\ Eisermann,
{\it Homological characterization of the unknot},
J.\ Pure Appl.\ Algebra {\bf 177} (2003), no.\ 2, 131--157.

\bibitem{FR1992}
R.\ Fenn and C.\ Rourke,
{\it Racks and links in codimension two},
J.\ Knot Theory Ramifications {\bf 1} (1992), no.\ 4, 343--406.

\bibitem{Inoue2019}
A.\ Inoue,
{\it On the knot quandle of a fibered knot, finiteness and equivalence of knot quandles},
Topology Appl.\ {\bf 265} (2019), 106811, 8 pp.

\bibitem{Inoue2018}
A.\ Inoue,
{\it On the knot quandle of the twist-spun trefoil},
preprint, available at \url{https://arxiv.org/abs/1808.06276}.

\bibitem{Kamada2017}
S.\ Kamada,
{\it Surface-knots in 4-space},
An introduction, Springer Monographs in Mathematics, Springer, Singapore, 2017.

\bibitem{NP2009}
M.\ Niebrzydowski and J.\ H.\ Przytycki,
{\it The quandle of the trefoil knot as the Dehn quandle of the torus},
Osaka J.\ Math.\ {\bf 46} (2009), no.\ 3, 645--659.

\bibitem{Rolfsen1976}
D.\ Rolfsen,
{\it Knots and links},
Mathematics Lecture Series {\bf 7}, Publish or Perish, Inc., Houston, TX, 1990.
Corrected reprint of the 1976 original.

\bibitem{RS2002}
C.\ Rourke and B.\ Sanderson,
{\it A new classification of links and some calculations using it},
preprint, available at \url{https://arxiv.org/abs/math/0006062}.

\bibitem{Satoh2002}
S.\ Satoh,
{\it Surface diagrams of twist-spun 2-knots},
Knots 2000 Korea, Vol.\ 1 (Yongpyong),
J.\ Knot Theory Ramifications {\bf 11} (2002), no.\ 3, 413--430.

\bibitem{Rig}
L.\ Vendramin,
Rig --- A {\sf GAP} package for racks and quandles,
available at \url{http://mate.dm.uba.ar/~lvendram/rig/}.

\bibitem{Zeeman1965}
E.\ C.\ Zeeman,
{\it Twisting spun knots},
Trans.\ Amer.\ Math.\ Soc.\ {\bf 115} (1965), 471--495.

\end{thebibliography}

\end{document}